\numberwithin{equation}{section} 
\numberwithin{figure}{section} 
\theoremstyle{plain}
\theoremstyle{plain}
\newtheorem{thm}{Theorem}
  \theoremstyle{plain}
  \newtheorem{lem}[thm]{Lemma}
  \theoremstyle{plain}
  \newtheorem{cor}[thm]{Corollary}
\begin{document}

\title{An inequality for bi-orthogonal pairs}

\author{Christopher Meaney}

\address{Department of Mathematics, Faculty of Science, Macquarie University, North Ryde NSW 2109, Australia}

\email{chrism@maths.mq.edu.au}

\keywords{bi-orthogonal pair,   Bessel's inequality,
 orthogonal expansion, Lebesgue constants}

\subjclass[2000]{42C15, 46C05}

\begin{abstract}
We use Salem's method \cite{Salem41,MR0070756} to prove that there
is a   lower bound for partial sums of series of bi-orthogonal
vectors in a Hilbert space, or the dual vectors. This is applied to
some lower bounds on $L^{1}$ norms for orthogonal expansions. There
is also an application concerning linear algebra.
\end{abstract}
\maketitle

\section{Introduction}

Suppose that $H$ is a Hilbert space, $n\in\mathbb{N}$, and that
$J=\left\{ 1,\dots,n\right\} $ or $J=\mathbb{N}$. A pair of sets
$\left\{ v_{j}\;:\; j\in J\right\} $ and $\left\{ w_{j}\;:\; j\in J\right\} $
in $H$ are said to be \emph{a bi-orthogonal pair} when\[
\left\langle v_{j},w_{k}\right\rangle _{H}=\delta_{jk},\qquad\forall j,k\in J.\]

Theorem 1 below is the main result of this paper and is based on ideas
from Salem \cite{Salem41,MR0070756}, where Bessel's inequality is
combined with a result of Menshov \cite{49.0293.01}. Following the
proof of this theorem, we will describe Salem's method of using $L^{2}$
inequalities to produce $L^{1}$ estimates on maximal functions. Such
estimates are related to stronger results of {O}levski\u\i \cite{0321.42010},
Kashin and Szarek \cite{0857.42019}, and Bochkarev \cite{boch2008}.
We conclude with an observation about the statement of Theorem 1 in
a linear algebra setting. Some of these results were discussed in
\cite{MR2328515}, where it was shown that Salem's methods emphasised
the universality of the Rademacher-Menshov Theorem.

\section{The Main Result}
\begin{thm}
\label{thm:main}There is a positive constant $c$ with the following
property. For every $n\ge1$, every Hilbert space $H$, and every
bi-orthogonal pair $\left\{ v_{1},\dots,v_{n}\right\} $ and $\left\{ w_{1},\dots,w_{n}\right\} $
in $H$,\begin{equation}
\log n\le c\,\max_{1\le m\le n}\left\Vert w_{m}\right\Vert _{H}\,\max_{1\le k\le n}\left\Vert \sum_{j=1}^{k}v_{j}\right\Vert _{H}.\label{eq:biorth}\end{equation}

\end{thm}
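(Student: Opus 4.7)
The plan is to follow Salem's method, as signposted in the introduction, combining Bessel's inequality with Menshov's classical construction. Let $T_k := \sum_{j=1}^k v_j$, $M := \max_{1\le m\le n}\|w_m\|_H$, and $S := \max_{1\le k\le n}\|T_k\|_H$. Bi-orthogonality yields the fundamental identity
\[
\langle T_k, w_m\rangle_H \;=\; \mathbf{1}_{\{m\le k\}},
\]
which is the only structural input that will be used.

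For arbitrary scalars $c_1,\dots,c_n$, I would set
\[
X \;:=\; \sum_{k=1}^n c_k T_k \;=\; \sum_{j=1}^n b_j v_j, \qquad b_j := \sum_{k=j}^n c_k
\]
(the second equality is Abel summation). Bi-orthogonality then gives $\langle X, w_m\rangle = b_m$, whence Cauchy--Schwarz delivers the master inequality
\[
|b_m| \;\le\; M\,\|X\|_H \qquad \text{for every } m.
\]

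The strategy now proceeds in two steps. First, I would select the coefficients $(c_k)$ following Menshov's 1923 construction \cite{49.0293.01}: a dyadic arrangement of $(c_k)$ for which $\sum_k |c_k|^2$ is bounded by an absolute constant while $\max_m |b_m| \gtrsim \log n$ for some index $m$. Second, I would apply Bessel's inequality in $H$ (to the orthonormal system obtained by Gram--Schmidt on $\{v_j\}$, or equivalently on $\{w_m\}$), combined with the bi-orthogonal relations, to upgrade the triangle estimate to a Bessel-type bound $\|X\|_H \lesssim S\,(\sum_k|c_k|^2)^{1/2}$. Inserting these two facts into the master inequality produces $\log n \lesssim M\,S$, which is the statement to be proved.

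The main obstacle is the Bessel-type bound on $\|X\|_H$. The naive triangle inequality gives only $\|X\|_H \le S\sum_k|c_k|$, and Menshov's coefficients have $\ell^1$-norm of order $\log n$, so this bound is too weak by precisely that factor and yields only the trivial conclusion $1 \lesssim MS$. The crux of Salem's method, which must be adapted to the present Hilbert-space setting, is that the dyadic structure of Menshov's coefficients, used in tandem with bi-orthogonality, permits Bessel's inequality to replace the $\ell^1$ bound by an $\ell^2$ bound on $\|X\|_H$, so that the $\log n$ factor produced by Menshov's construction survives in the final estimate rather than being consumed by the triangle inequality.
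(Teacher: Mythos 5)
Your high-level plan---Bessel plus Menshov---matches the paper's, but the proof as sketched has a genuine gap at exactly the step you flag as the ``crux,'' and the gap cannot be repaired in the form you describe.

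The claimed Bessel-type estimate $\|X\|_H \lesssim S\,\bigl(\sum_k|c_k|^2\bigr)^{1/2}$ for $X=\sum_k c_k T_k$ is simply false in general, regardless of any dyadic structure on $(c_k)$. Take $H=\mathbb{C}^n$, $v_j=e_j$, $w_j=e_j$ (a bi-orthogonal pair), and $c_k=n^{-1/2}$ for all $k$. Then $T_k=e_1+\cdots+e_k$, $S=\sqrt n$, $\sum_k|c_k|^2=1$, but
\[
\|X\|_H^2 \;=\; \frac{1}{n}\Bigl\|\sum_{k=1}^n T_k\Bigr\|^2 \;=\; \frac{1}{n}\sum_{j=1}^n (n-j+1)^2 \;\asymp\; n^2,
\]
so $\|X\|_H\asymp n$ while $S\bigl(\sum|c_k|^2\bigr)^{1/2}=\sqrt{n}$. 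There is no Gram--Schmidt or bi-orthogonality input that rescues this; the obstacle is that the vectors $T_k$ have no useful near-orthogonality, so the map $(c_k)\mapsto\sum_k c_k T_k$ has $\ell^2\to H$ operator norm which can be as large as $S\sqrt n$, not $S$.

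What the paper does instead is precisely designed to sidestep this: it never forms a linear combination $\sum_k c_k T_k$ with scalar weights. It lifts to the auxiliary space $V=L^2([0,1],H)$, chooses a \emph{decreasing} chain of indicator functions $f_1\ge\cdots\ge f_n\ge 0$, and forms $P_G(x)=G(x)\sum_j f_j(x)v_j$. Abel summation converts this to $G(x)\sum_k \Delta f_k(x)\,\sigma_k$ with $\sigma_k=T_k$, and the crucial point is that the $\Delta f_k$ are indicators of \emph{disjoint} sets, so at each $x$ exactly one term survives and $\|P_G(x)\|_H\le |G(x)|\max_k\|\sigma_k\|_H$ pointwise. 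Integrating gives $\|P_G\|_V\le \|G\|_2\,S$ with no $\ell^1$-vs-$\ell^2$ loss at all. Bessel's inequality is then applied in $V$ to the orthogonal system $p_k=F_k w_k$ built from a Menshov orthonormal system $\{F_k\}$ on $[0,1]$; this is where $M=\max_m\|w_m\|_H$ enters, and the Menshov maximal-function lower bound supplies the $\log n$. The functional ``coefficients'' $f_j(x)$ thus do what scalar coefficients cannot: they are simultaneously summable to $\log n$ in an averaged sense (via the choice of $G$ and $\{F_k\}$) and disjointly supported after differencing. Your proposal should be rewritten around this $L^2([0,1],H)$ construction; as it stands, the master inequality $|b_m|\le M\|X\|_H$ is correct but the upper bound on $\|X\|_H$ needed to close the argument does not exist.
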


\subsubsection*{Proof}

Equip $[0,1]$ with Lebesgue measure $\lambda$ and let $V=L^{2}\left([0,1],H\right)$
be the space of $H$-valued square integrable functions on $[0,1]$,
with inner product\[
\left\langle F,G\right\rangle _{V}=\int_{0}^{1}\left\langle F(x),G(x)\right\rangle _{H}dx\]
and norm\[
\left\Vert F\right\Vert _{V}=\left(\int_{0}^{1}\left\Vert F(x)\right\Vert _{H}^{2}dx\right).\]
Suppose that $\left\{ F_{1},\dots,F_{n}\right\} $ is an orthonormal
set in $L^{2}\left([0,1]\right)$ and define vectors $p_{1},\dots,p_{n}$
in $V$ by\[
p_{k}(x)=F_{k}(x)w_{k},\qquad1\le k\le n,x\in[0,1].\]
Then \[
\left\langle p_{k}(x),p_{j}(x)\right\rangle _{H}=F_{k}(x)\overline{F_{j}(x)}\,\left\langle w_{k},w_{j}\right\rangle _{H},\qquad1\le j,k\le n,\]
and so $\left\{ p_{1},\dots,p_{n}\right\} $ is an orthogonal set
in $V$. For every $P\in V$, Bessel's Inequality says that \begin{equation}
\sum_{k=1}^{n}\frac{\left|\left\langle P,p_{k}\right\rangle _{V}\right|^{2}}{\left\Vert w_{k}\right\Vert _{H}^{2}}\le\left\Vert P\right\Vert _{V}^{2}.\label{eq:bess}\end{equation}
Note that here \[
\left\langle P,p_{k}\right\rangle _{V}=\int_{0}^{1}\left\langle P(x),w_{k}\right\rangle _{H}\overline{F_{k}(x)}dx,\qquad1\le k\le n.\]

Now consider a decreasing sequence $f_{1}\ge f_{2}\ge\cdots\ge f_{n}\ge f_{n+1}=0$
of characteristic functions of measurable subsets of $[0,1]$. For
each scalar-valued $G\in L^{2}([0,1])$ define an element of $V$
by setting\[
P_{G}(x)=G(x)\sum_{j=1}^{n}f_{j}(x)v_{j}.\]
 The Abel transformation shows that \[
P_{G}(x)=G(x)\sum_{k=1}^{n}\Delta f_{k}(x)\sigma_{k},\]
where $\Delta f_{k}=f_{k}-f_{k+1}$ and $\sigma_{k}=\sum_{j=1}^{k}v_{j}$,
for $1\le k\le n$. The functions $\Delta f_{1},\dots,\Delta f_{n}$
are characteristic functions of mutually disjoint subsets of $[0,1]$
and for each $0\le x\le1$ at most one of the values $\Delta f_{k}(x)$
is non-zero. Notice that \[
\left\Vert P_{G}(x)\right\Vert _{H}^{2}=\left|G(x)\right|^{2}\sum_{k=1}^{n}\Delta f_{k}(x)\left\Vert \sigma_{k}\right\Vert _{H}^{2}.\]
 Integrating over $[0,1]$ gives \[
\left\Vert P_{G}\right\Vert _{V}^{2}\le\left\Vert G\right\Vert _{2}^{2}\max_{1\le k\le n}\left\Vert \sigma_{k}\right\Vert _{H}^{2}.\]
Note that \[
\left\langle P_{G}(x),p_{k}(x)\right\rangle _{H}=G(x)f_{k}(x)\overline{F_{k}(x)}\,\left\langle v_{k},w_{k}\right\rangle _{H},\quad1\le k\le n,\]
and \[
\left\langle P_{G},p_{k}\right\rangle _{V}=\int_{0}^{1}G(x)f_{k}(x)\overline{F_{k}(x)}dx\,\left\langle v_{k},w_{k}\right\rangle _{H},\quad1\le k\le n.\]
Combining this with Bessel's Inequality \eqref{eq:bess}, we arrive
at the inequality\begin{equation}
\sum_{k=1}^{n}\left|\int_{[0,1]}Gf_{k}\overline{F_{k}}d\lambda\right|^{2}\frac{1}{\left\Vert w_{k}\right\Vert _{H}^{2}}\le\left\Vert G\right\Vert _{2}^{2}\max_{1\le k\le n}\left\Vert \sigma_{k}\right\Vert _{H}^{2}.\label{eq:bigbess}\end{equation}
This implies that\begin{equation}
\left(\sum_{k=1}^{n}\left|\int_{[0,1]}Gf_{k}\overline{F_{k}}d\lambda\right|^{2}\right)\le\left(\max_{1\le j\le n}\|w_{k}\|_{H}^{2}\right)\left\Vert G\right\Vert _{2}^{2}\left(\max_{1\le k\le n}\left\Vert \sigma_{k}\right\Vert _{H}^{2}\right).\label{eq:mainpoint}\end{equation}
We now concentrate on the case where the functions $F_{1},\dots,F_{n}$
are given by Menshov's result (Lemma 1 on page 255 of Kashin and Saakyan\cite{MR1007141}.)
There is a constant $c_{0}>0$, independent of $n$, so that \begin{equation}
\lambda\left(\left\{ x\in[0,1]\;:\;\max_{1\le j\le n}\left|\sum_{k=1}^{j}F_{k}(x)\right|>c_{0}\log(n)\,\sqrt{n}\right\} \right)\ge\frac{1}{4}.\label{eq:menmain}\end{equation}
Let us use $\mathcal{M}(x)$ to denote the maximal function \[
\mathcal{M}(x)=\max_{1\le j\le n}\left|\sum_{k=1}^{j}F_{k}(x)\right|,\qquad0\le x\le1.\]
 Define an integer-valued function $m(x)$ on $[0,1]$ by \[
m(x)=\min\left\{ m\;:\;\left|\sum_{k=1}^{m}F_{k}(x)\right|=\mathcal{M}(x)\right\} .\]
Furthermore, let $f_{k}$ be the characteristic function of the subset
\[
\left\{ x\in[0,1]\;:\; m(x)\ge k\right\} .\]
Then \[
\sum_{k=1}^{n}f_{k}(x)F_{k}(x)=S_{m(x)}(x)=\sum_{k=1}^{m(x)}F_{k}(x),\qquad\forall0\le x\le1.\]
For an arbitrary $G\in L^{2}\left([0,1]\right)$ we have \[
\int_{0}^{1}G(x)\overline{S_{m(x)}(x)}dx=\sum_{k=1}^{n}\int_{0}^{1}G(x)f_{k}(x)\overline{F_{k}(x)}dx.\]
Using the Cauchy-Schwarz inequality on the right hand side, we have
\begin{equation}
\left|\int_{0}^{1}G(x)\overline{S_{m(x)}(x)}dx\right|\le\sqrt{n}\left(\sum_{k=1}^{n}\left|\int_{0}^{1}Gf_{k}\overline{F_{k}}\, d\lambda\right|^{2}\right)^{1/2},\label{eq:gsf}\end{equation}
for all $G\in L^{2}([0,1])$. We will use the function $G$ which
has $\left|G(x)\right|=1$ everywhere on $[0,1]$ and with \[
G(x)\overline{S_{m(x)}(x)}=\mathcal{M}(x),\qquad\forall0\le x\le1.\]
 In this case, the left hand side of \eqref{eq:gsf} is \[
\left\Vert \mathcal{M}\right\Vert _{1}\ge\frac{c_{0}}{4}\log(n)\sqrt{n},\]
because of \eqref{eq:menmain}.Combining this with \eqref{eq:gsf}
we have \[
\frac{c_{0}}{4}\log(n)\sqrt{n}\le\sqrt{n}\left(\sum_{k=1}^{n}\left|\int_{0}^{1}Gf_{k}\overline{F_{k}}\, d\lambda\right|^{2}\right)^{1/2}.\]
This can be put back into \eqref{eq:mainpoint} to obtain \eqref{eq:biorth}.
Notice that $\|G\|_{2}=1$ on the right hand side of \eqref{eq:bigbess}.
$\square$

\section{Applications}

\subsection{$L^{1}$ estimates }

In this section we use $H=L^{2}(X,\mu)$, for a positive measure space
$(X,\mu)$. Suppose we are given an orthonormal sequence of functions
$\left(h_{n}\right)_{n=1}^{\infty}$ in $L^{2}(X,\mu)$, and suppose
that each of the functions $h_{n}$ is essentially bounded on $X$.
Let $\left(a_{n}\right)_{n=1}^{\infty}$ be a sequence of non-zero
complex numbers and set \[
M_{n}=\max_{1\le j\le n}\|h_{j}\|_{\infty}\;\text{and }\;\mathcal{S}_{n}^{*}(x)=\max_{1\le k\le n}\left|\sum_{j=1}^{k}a_{j}h_{j}(x)\right|,\quad\text{ for }x\in X,\, n\ge1.\]

\begin{lem}
For a set of functions $\left\{ h_{1},\dots,h_{n}\right\} \subset L^{2}(X,\mu)\cap L^{\infty}(X,\mu)$
and maximal function $\mathcal{S}_{n}^{*}(x)=\max_{1\le k\le n}\left|\sum_{j=1}^{k}a_{j}h_{j}(x)\right|$,
we have\[
\left|a_{j}h_{j}(x)\right|\le2\mathcal{S}_{n}^{*}(x),\qquad\forall x\in X,1\le j\le n,\]
and \[
\frac{\left|\sum_{j=1}^{k}a_{j}h_{j}(x)\right|}{\mathcal{S}_{n}^{*}(x)}\le1,\quad\forall1\le k\le n\text{ and }x\text{ where }\mathcal{S}_{n}^{*}(x)\neq0.\]
\end{lem}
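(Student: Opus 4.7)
\medskip
\noindent\textbf{Proof proposal.}
Both inequalities are essentially bookkeeping consequences of the definition of $\mathcal{S}_{n}^{*}(x)$, so the proof should be short and the only planning needed is to identify the right telescoping identity and to deal carefully with the case $j=1$ (or $k=0$).

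For the second assertion, I would simply observe that for each fixed $x$ and each $k$ with $1\le k\le n$, the quantity $\left|\sum_{j=1}^{k}a_{j}h_{j}(x)\right|$ is one of the terms over which the maximum defining $\mathcal{S}_{n}^{*}(x)$ is taken. Hence $\left|\sum_{j=1}^{k}a_{j}h_{j}(x)\right|\le \mathcal{S}_{n}^{*}(x)$, and whenever $\mathcal{S}_{n}^{*}(x)\ne 0$ one may divide to obtain the claim. No further argument is required.

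For the first assertion, the plan is to write $a_{j}h_{j}(x)$ as a difference of two consecutive partial sums. Concretely, setting $S_{k}(x)=\sum_{i=1}^{k}a_{i}h_{i}(x)$ with the convention $S_{0}(x)=0$, one has
\[
a_{j}h_{j}(x)=S_{j}(x)-S_{j-1}(x),\qquad 1\le j\le n.
\]
Applying the triangle inequality and then bounding each $|S_{k}(x)|$ by $\mathcal{S}_{n}^{*}(x)$ (using the second assertion when $k\ge 1$, and trivially when $k=0$) yields $|a_{j}h_{j}(x)|\le 2\mathcal{S}_{n}^{*}(x)$ for every $x\in X$ and every $1\le j\le n$.

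There is no real obstacle here; the only mild subtlety is making sure that the boundary case $j=1$ is covered, which is handled by the convention $S_{0}\equiv 0$. Note also that the bound $|a_{j}h_{j}(x)|\le 2\mathcal{S}_{n}^{*}(x)$ holds pointwise even at points where $\mathcal{S}_{n}^{*}(x)=0$, since both sides vanish there.
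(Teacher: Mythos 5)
Your proof is correct and uses exactly the same argument as the paper: express $a_{j}h_{j}(x)$ as the difference of consecutive partial sums, apply the triangle inequality, and bound each partial sum by $\mathcal{S}_{n}^{*}(x)$. Your explicit convention $S_{0}\equiv 0$ to cover $j=1$ is a minor tidying-up of the paper's phrasing, which just restricts the telescoping identity to $2\le j\le n$ and leaves $j=1$ implicit.
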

\begin{proof}
The first inequality follows from the triangle inequality and the
fact that \[
a_{j}h_{j}(x)=\sum_{k=1}^{j}a_{k}h_{k}(x)-\sum_{k=1}^{j-1}a_{k}h_{k}(x)\]
for $2\le j\le n$. The second inequality is a consequence of the
definition of $\mathcal{S}_{n}^{*}$.
\end{proof}
Fix $n\ge1$ and let \[
v_{j}(x)=a_{j}h_{j}(x)\left(\mathcal{S}_{n}^{*}(x)\right)^{-1/2}\;\text{ and }w_{j}(x)=a_{j}^{-1}h_{j}(x)\left(\mathcal{S}_{n}^{*}(x)\right)^{1/2}\]
for all $x\in X$ where $\mathcal{S}_{n}^{*}(x)\neq0$ and $1\le j\le n$.
From their definition,\[
\left\{ v_{1},\dots,v_{n}\right\} \;\text{ and }\;\left\{ w_{1},\dots,w_{n}\right\} \]
 are a bi-orthogonal pair in $L^{2}(X,\mu)$. The conditions we have
placed on the functions $h_{j}$ give:\[
\|w_{j}\|_{2}^{2}=\left|a_{j}\right|^{-2}\int_{X}\left|h_{j}\right|^{2}\left(\mathcal{S}_{n}^{*}\right)d\mu\le\frac{M_{n}^{2}}{\min_{1\le k\le n}\left|a_{k}\right|^{2}}\|\mathcal{S}_{n}^{*}\|_{1}\]
and \[
\left\Vert \sum_{j=1}^{k}v_{j}\right\Vert _{2}^{2}=\int_{X}\frac{1}{(\mathcal{S}_{n}^{*})}\left|\sum_{j=1}^{k}a_{j}h_{j}\right|^{2}d\mu\le\left\Vert \sum_{j=1}^{k}a_{j}h_{j}\right\Vert _{1}.\]
We can put these estimates into \eqref{eq:biorth} and find that \[
\log n\le c\,\frac{M_{n}}{\min_{1\le k\le n}\left|a_{k}\right|}\|S_{n}^{*}\|_{1}^{1/2}\,\max_{1\le k\le n}\left\Vert \sum_{j=1}^{k}a_{j}h_{j}\right\Vert _{1}^{1/2}.\]
 We could also say that \[
\max_{1\le k\le n}\left\Vert \sum_{j=1}^{k}a_{j}h_{j}\right\Vert _{1}\le\|\mathcal{S}_{n}^{*}\|_{1}^{}\]
and so \[
\log(n)\le c\frac{M_{n}}{\min_{1\le k\le n}\left|a_{k}\right|}\left\Vert \mathcal{S}_{n}^{*}\right\Vert _{1}.\]

\begin{cor}
\label{cor:maxmaxmax}Suppose that $\left(h_{n}\right)_{n=1}^{\infty}$
is an orthonormal sequence in $L^{2}\left(X,\mu\right)$ consisting
of essentially bounded functions. For each sequence $\left(a_{n}\right)_{n=1}^{\infty}$
of complex numbers and each $n\ge1,$\[
\left(\min_{1\le k\le n}\left|a_{k}\right|\ \log n\right)^{2}\le c\left(\max_{1\le k\le n}\left\Vert h_{k}\right\Vert _{\infty}\right)^{2}\left\Vert \max_{1\le k\le n}\left|\sum_{j=1}^{k}a_{j}h_{j}\right|\;\right\Vert _{1}\max_{1\le k\le n}\left\Vert \sum_{j=1}^{k}a_{j}h_{j}\right\Vert _{1}\]
and\[
\min_{1\le k\le n}\left|a_{k}\right|\ \log n\le c\left(\max_{1\le k\le n}\left\Vert h_{k}\right\Vert _{\infty}\right)\left\Vert \max_{1\le k\le n}\left|\sum_{j=1}^{k}a_{j}h_{j}\right|\;\right\Vert _{1}.\]
The constant $c$ is independent of $n$, and the sequences involved
here.
\end{cor}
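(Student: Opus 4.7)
The plan is to apply Theorem \ref{thm:main} to the bi-orthogonal pair $\{v_j\},\{w_j\}$ constructed just above the statement and to rearrange the resulting inequality; both claims in the corollary will drop out from that single application.

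First I would verify that the defined families, extended by zero outside $\{\mathcal{S}_n^*\neq0\}$, really are a bi-orthogonal pair in $L^2(X,\mu)$. The preceding lemma ensures $|a_jh_j|\le 2\mathcal{S}_n^*$ everywhere, so $v_j$ already vanishes on the exceptional set; inside $\langle v_j,w_k\rangle$ the factors $(\mathcal{S}_n^*)^{\pm 1/2}$ cancel, and the orthonormality of the $h_j$ then delivers $\delta_{jk}$ (a complex conjugation on the $a_k^{-1}$ appearing in the definition of $w_k$ may be needed for complex coefficients, but this phase is immaterial to the norm estimates below).

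Next I would compute the two ingredients of \eqref{eq:biorth}. From $|w_k(x)|^2=|a_k|^{-2}|h_k(x)|^2\mathcal{S}_n^*(x)$ together with $|h_k|\le M_n$, integration gives $\|w_k\|_2^2\le M_n^2(\min_{k}|a_k|)^{-2}\|\mathcal{S}_n^*\|_1$. A direct rewriting shows $\sigma_k=\sum_{j=1}^k v_j=(\mathcal{S}_n^*)^{-1/2}\sum_{j=1}^k a_jh_j$, and the second part of the preceding lemma, namely $|\sum_{j=1}^k a_jh_j|\le\mathcal{S}_n^*$ pointwise, yields $\|\sigma_k\|_2^2\le \|\sum_{j=1}^k a_jh_j\|_1$.

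Substituting these two bounds into \eqref{eq:biorth}, squaring the result, and multiplying across by $(\min_k|a_k|)^2$ gives the first displayed inequality of the corollary. The second claim then follows immediately, because the same pointwise bound $|\sum_{j=1}^k a_jh_j|\le\mathcal{S}_n^*$ implies $\max_k\|\sum_{j=1}^k a_jh_j\|_1\le\|\mathcal{S}_n^*\|_1$, which lets one absorb one square-root and recover the linear form. There is no substantive obstacle here; the work is essentially bookkeeping, with the only subtlety being the verification that the $\mathcal{S}_n^*$ factors really do cancel out of the bi-orthogonality relation so that Theorem \ref{thm:main} is applicable.
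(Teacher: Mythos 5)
Your proposal follows the paper's own derivation essentially verbatim: construct the bi-orthogonal pair $v_j=a_jh_j(\mathcal{S}_n^*)^{-1/2}$, $w_j=a_j^{-1}h_j(\mathcal{S}_n^*)^{1/2}$, bound $\|w_j\|_2^2$ and $\|\sum_{j\le k}v_j\|_2^2$ using $|h_j|\le M_n$ and $|\sum_{j\le k}a_jh_j|\le\mathcal{S}_n^*$, feed these into Theorem~\ref{thm:main}, and absorb one square root via $\max_k\|\sum_{j\le k}a_jh_j\|_1\le\|\mathcal{S}_n^*\|_1$. Your two side remarks --- that $a_k^{-1}$ in the definition of $w_k$ should be $\overline{a_k}^{-1}$ for the inner product $\langle v_j,w_k\rangle$ to equal $\delta_{jk}$ with complex coefficients, and that the exceptional set $\{\mathcal{S}_n^*=0\}$ is harmless since $v_j$ vanishes there --- are correct and tidy up details the paper passes over silently, but they do not change the argument.
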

As observed in \cite{0857.42019}, this can also be obtained as a
consequence of \cite{0321.42010}. In addition, see \cite{MR565158}.

The following is a paraphrase of the last page of \cite{Salem41}.
For the special case of Fourier series on the unit circle, see Proposition
1.6.9 in \cite{pinsky2002}.
\begin{cor}
Suppose that $\left(h_{n}\right)_{n=1}^{\infty}$ is an orthonormal
sequence in $L^{2}\left(X,\mu\right)$ consisting of essentially bounded
functions with $\left\Vert h_{n}\right\Vert _{\infty}\le M$ for all
$n\ge1$. For each decreasing sequence $\left(a_{n}\right)_{n=1}^{\infty}$
of positive numbers and each $n\ge1,$\[
\left(a_{n}\ \log n\right)^{2}\le cM^{2}\left\Vert \max_{1\le k\le n}\left|\sum_{j=1}^{k}a_{j}h_{j}\right|\;\right\Vert _{1}\max_{1\le k\le n}\left\Vert \sum_{j=1}^{k}a_{j}h_{j}\right\Vert _{1}\]
and\[
a_{n}\ \log n\le cM\left\Vert \max_{1\le k\le n}\left|\sum_{j=1}^{k}a_{j}h_{j}\right|\;\right\Vert _{1}.\]
In particular, if $\left(a_{n}\log n\right)_{n=1}^{\infty}$ is unbounded
then \[
\left(\left\Vert \max_{1\le k\le n}\left|\sum_{j=1}^{k}a_{j}h_{j}\right|\;\right\Vert _{1}\right)_{n=1}^{\infty}\text{ is unbounded.}\]
The constant $c$ is independent of $n$, and the sequences involved
here.
\end{cor}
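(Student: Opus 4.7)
The plan is to apply Corollary \ref{cor:maxmaxmax} directly, exploiting the extra monotonicity hypothesis on $(a_n)$ to eliminate the $\min$ in favour of $a_n$ itself. Since $(a_n)_{n=1}^{\infty}$ is a decreasing sequence of positive numbers, one has
\[
\min_{1\le k\le n}\left|a_{k}\right|=a_{n}
\]
for every $n\ge1$. Likewise, the uniform bound $\left\Vert h_{n}\right\Vert _{\infty}\le M$ gives
\[
\max_{1\le k\le n}\left\Vert h_{k}\right\Vert _{\infty}\le M.
\]

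Substituting these two observations into the two displayed inequalities of Corollary \ref{cor:maxmaxmax} immediately yields the two claimed inequalities of the present corollary, with the same absolute constant $c$. So the first two assertions are essentially bookkeeping, not new work.

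For the final assertion I would argue by contraposition. Suppose the sequence
\[
\left(\left\Vert \max_{1\le k\le n}\left|\sum_{j=1}^{k}a_{j}h_{j}\right|\right\Vert _{1}\right)_{n=1}^{\infty}
\]
is bounded, say by some constant $B$. Then the second inequality just derived gives $a_{n}\log n\le cMB$ for every $n\ge1$, so $(a_{n}\log n)_{n=1}^{\infty}$ is bounded. Taking the contrapositive finishes the proof.

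I do not anticipate any real obstacle here: the content is entirely in Theorem \ref{thm:main} and Corollary \ref{cor:maxmaxmax}, and this final corollary is only a specialisation to monotone coefficient sequences together with a standard contrapositive. The only thing to be slightly careful about is to note explicitly that monotonicity turns the rather uninformative factor $\min_{k}|a_{k}|$ into the single quantity $a_{n}$, which is what makes the divergence criterion in the last displayed statement genuinely useful.
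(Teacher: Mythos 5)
Your proposal is correct and is exactly the argument the paper intends: the paper gives no explicit proof of this corollary, presenting it as an immediate specialization of Corollary~\ref{cor:maxmaxmax} once one notes that monotonicity forces $\min_{1\le k\le n}|a_k|=a_n$ and the uniform bound forces $\max_{1\le k\le n}\|h_k\|_\infty\le M$. Your contrapositive for the final unboundedness statement is the standard and correct way to finish.
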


\subsection{Salem's Approach to the Littlewood Conjecture}

We concentrate on the case where $H=L^{2}\left(\mathbb{T}\right)$
and the orthonormal sequence is a subset of $\left\{ e^{inx}\;:\; n\in\mathbb{N}\right\} $.
Let \[
m_{1}<m_{2}<m_{3}<\cdots\]
 be an increasing sequence of natural numbers and let \[
h_{k}(x)=e^{im_{k}x}\]
for all $k\ge1$ and $x\in\mathbb{T}.$ In addition, let \[
D_{m}(x)=\sum_{k=-m}^{m}e^{ikx}\]
 be the $m^{\text{th }}$ Dirichlet kernel. For all $N\ge m\ge1$,
there is the partial sum\[
\sum_{m_{k}\le m}a_{k}h_{k}(x)=D_{m}*\left(\sum_{m_{k}\le N}a_{k}h_{k}\right)(x).\]
It is a fact that $D_{m}$ is an even function which satisfies the
inequalities:\begin{equation}
\left|D_{m}(x)\right|\le\begin{cases}
2m+1 & \text{ for all }x,\\
1/|x| & \text{for }\frac{1}{2m+1}<x<2\pi-\frac{1}{2m+1}.\end{cases}\label{eq:95}\end{equation}

\begin{lem}
If $p$ is a trigonometric polynomial of degree $N$ then the maximal
function of its Fourier partial sums \[
S^{*}p(x)=\sup_{m\ge1}\left|D_{m}*p(x)\right|\]
satisfies\[
\left\Vert S^{*}p\right\Vert _{1}\le c\log\left(2N+1\right)\left\Vert p\right\Vert _{1}\]
\end{lem}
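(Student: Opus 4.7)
The plan is to use duality to reduce the $L^1$ estimate to a uniform $L^\infty$ bound on an auxiliary kernel. Since $p$ has degree $N$, we have $D_m * p = p$ for every $m \ge N$, so the supremum defining $S^*p(x) = \sup_{m \ge 1}|D_m * p(x)|$ is in fact a finite maximum, attained at some measurable $m : \mathbb{T} \to \{1,\dots,N\}$ (pick the smallest index achieving the maximum). Choosing a measurable unimodular $g$ with $g(x)(D_{m(x)} * p)(x) = S^*p(x)$ and applying Fubini gives
\[
\|S^*p\|_1 = \int g(x)(D_{m(x)}*p)(x)\,dx = \int p(y) K(y)\,dy, \qquad K(y) = \int g(x)\, D_{m(x)}(x-y)\,dx,
\]
so $\|S^*p\|_1 \le \|p\|_1 \|K\|_\infty$, and the lemma reduces to showing $\|K\|_\infty \le c\log(2N+1)$.

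For this, I would discard $g$ pointwise and then trade the $x$-dependent selection for a pointwise supremum,
\[
|K(y)| \le \int |D_{m(x)}(x-y)|\,dx \le \int \sup_{1\le m\le N}|D_m(x-y)|\,dx.
\]
The inequalities (2.5), together with $|\sin(t/2)| \ge |t|/\pi$ on $[-\pi,\pi]$ to extend the $1/|t|$ bound up to the endpoints, yield $\sup_{1\le m\le N}|D_m(t)| \le \min(2N+1,\pi/|t|)$. After the substitution $t = x-y$ and a split of the integral at $|t| = \pi/(2N+1)$, the region where the constant bound is active contributes $O(1)$, and the region where the $\pi/|t|$ bound is active contributes $2\pi\log(2N+1)$. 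This gives $|K(y)| \le c\log(2N+1)$ uniformly in $y$, completing the proof.

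The step I expect to matter most is the absorption of the selection $m(x)$ into the pointwise supremum $\sup_{1\le m\le N}|D_m(\cdot)|$: this converts an arbitrary $x$-dependent choice into a fixed maximal kernel accessible to (2.5). A naive triangle-inequality bound $\|S^*p\|_1 \le \sum_{m=1}^N \|D_m * p\|_1 \le \bigl(\sum_m \|D_m\|_1\bigr)\|p\|_1$ would only give the much weaker bound of order $N\log N$; the gain comes from the observation that the truncated maximal Dirichlet kernel $\min(2N+1,\pi/|t|)$ still has $L^1$ norm of order $\log N$, the same order as a single Dirichlet kernel $D_N$.
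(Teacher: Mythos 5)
Your argument is correct and ultimately rests on the same key fact the paper relies on: the pointwise dominating kernel $D^*(t)=\sup_{1\le m\le N}|D_m(t)|\le\min\left(2N+1,\pi/|t|\right)$ has $L^1$ norm $O(\log(2N+1))$. The linearization/duality step is an unnecessary detour you could skip: since for each $m\le N$ one has $|D_m*p(x)|\le(|D_m|*|p|)(x)\le(D^{*}*|p|)(x)$, the pointwise bound $S^{*}p\le D^{*}*|p|$ plus Young's inequality gives $\|S^{*}p\|_1\le\|D^{*}\|_1\|p\|_1\le c\log(2N+1)\|p\|_1$ directly, which is exactly what the paper's brief proof sketches.
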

\begin{proof}
For such a trigonometric polynomial $p$, the partial sums are all
partial sums of $p*D_{N}$, and all the Dirichlet kernels $D_{m}$
for $1\le m\le N$ are dominated by a function whose $L^{1}$ norm
is of the order of $\log(2N+1)$.
\end{proof}
We can combine this with the inequalities in Corollary \ref{cor:maxmaxmax},
since \[
\left\Vert \max_{1\le k\le n}\left|\sum_{j=1}^{k}a_{j}h_{j}\right|\;\right\Vert _{1}\le c\log\left(2m_{n}+1\right)\left\Vert \sum_{j=1}^{m}a_{j}h_{j}\right\Vert _{1}.\]
We then arrive at the main result in \cite{MR0070756}.
\begin{cor}
For an increasing sequence $\left(m_{n}\right)_{n=1}^{\infty}$ of
natural numbers and a sequence of non-zero complex numbers $\left(a_{n}\right)_{n=1}^{\infty}$
the partial sums of the trigonometric series \[
\sum_{k=1}^{\infty}a_{k}e^{im_{k}x}\]
 satisfy\[
\min_{1\le k\le n}\left|a_{k}\right|\frac{\log n}{\sqrt{\log(2m_{n}+1)}}\le c\max_{1\le k\le n}\left\Vert \sum_{j=1}^{k}a_{j}e^{im_{j}(\cdot)}\right\Vert _{1}.\]

\end{cor}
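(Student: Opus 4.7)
The plan is to combine the squared form of Corollary \ref{cor:maxmaxmax} with the preceding lemma on maximal Fourier partial sums, using the observation just before the corollary. Take $h_k(x) = e^{im_k x}$ on $\mathbb{T}$; these are orthonormal in $L^2(\mathbb{T})$ and satisfy $\|h_k\|_\infty = 1$, so in the notation of the $L^1$ application we have $M_n = 1$ uniformly in $n$. The first (squared) inequality of Corollary \ref{cor:maxmaxmax} therefore reads
\[
\left(\min_{1\le k\le n}|a_k|\,\log n\right)^{2}\le c\,\|\mathcal{S}_{n}^{*}\|_{1}\,\max_{1\le k\le n}\left\|\sum_{j=1}^{k}a_{j}e^{im_{j}(\cdot)}\right\|_{1}.
\]

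The next step is to control $\|\mathcal{S}_{n}^{*}\|_{1}$ by a single $L^{1}$ norm times $\log(2m_{n}+1)$. Let $p(x)=\sum_{j=1}^{n}a_{j}e^{im_{j}x}$, a trigonometric polynomial of degree $m_{n}$. Because $m_{1}<m_{2}<\cdots<m_{n}$, the identity $D_{m_{k}}*p=\sum_{j=1}^{k}a_{j}e^{im_{j}\cdot}$ noted just before the lemma gives the pointwise bound
\[
\mathcal{S}_{n}^{*}(x)=\max_{1\le k\le n}\left|\sum_{j=1}^{k}a_{j}e^{im_{j}x}\right|\le S^{*}p(x).
\]
Applying the lemma to $p$ yields $\|S^{*}p\|_{1}\le c\log(2m_{n}+1)\,\|p\|_{1}$, and since $p$ itself is the $n$-th partial sum we trivially have $\|p\|_{1}\le \max_{1\le k\le n}\|\sum_{j=1}^{k}a_{j}e^{im_{j}\cdot}\|_{1}$.

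Substituting these bounds back into the squared inequality above collapses the two factors of the max-of-partial-sums into a single square, and one obtains
\[
\left(\min_{1\le k\le n}|a_k|\,\log n\right)^{2}\le c\log(2m_{n}+1)\left(\max_{1\le k\le n}\left\|\sum_{j=1}^{k}a_{j}e^{im_{j}(\cdot)}\right\|_{1}\right)^{2}.
\]
Taking square roots and dividing by $\sqrt{\log(2m_{n}+1)}$ produces the claimed inequality, with an absolute constant $c$ that depends only on the constants appearing in Corollary \ref{cor:maxmaxmax} and in the lemma.

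There is no real obstacle beyond keeping the bookkeeping straight: the one point that needs a moment of care is verifying that the indexing matches, i.e.\ that $D_{m_{k}}*p$ truly recovers exactly the first $k$ terms of the lacunary-type sum and not more, which is guaranteed by strict monotonicity of $(m_{n})$. Everything else is a clean substitution of the lemma's $\log(2m_{n}+1)$ estimate into the $\|\mathcal{S}_{n}^{*}\|_{1}$ slot of Corollary \ref{cor:maxmaxmax}.
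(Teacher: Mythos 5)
Your proposal is correct and follows the paper's own route exactly: take the squared inequality of Corollary \ref{cor:maxmaxmax} with $M_n=1$, dominate $\|\mathcal{S}_n^*\|_1$ by $c\log(2m_n+1)\|p\|_1$ via the Dirichlet-kernel identity and the preceding lemma, absorb $\|p\|_1$ into the max of partial-sum norms, and take square roots. The only thing the paper leaves implicit that you spell out — the pointwise bound $\mathcal{S}_n^*\le S^*p$ from strict monotonicity of $(m_n)$ — is exactly the right detail to verify.
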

This was Salem's attempt at Littlewood's conjecture, which was subsequently
settled in \cite{MR616222} and \cite{MR621019}.

\subsection{Linearly Independent Sequences}

Notice that if $\left\{ v_{1},\dots,v_{n}\right\} $ is an arbitrary
linearly independent subset of $H$ then there is a unique subset
\[
\left\{ w_{j}^{n}\;:\;1\le j\le n\right\} \subseteq\text{span}\left(\left\{ v_{1},\dots,v_{n}\right\} \right)\]
 so that $\left\{ v_{1},\dots,v_{n}\right\} $ and $\left\{ w_{1}^{n},\dots,w_{n}^{n}\right\} $are
a bi-orthogonal pair. See Theorem 15 in Chapter 3 of \cite{HoffmanKunze}.
We can apply Theorem 1 to the pair in either order.
\begin{cor}
For each $n\ge2$ and linearly independent subset $\left\{ v_{1},\dots,v_{n}\right\} $
in an inner-product space $H$ , with dual basis \textup{$\left\{ w_{1}^{n},\dots,w_{n}^{n}\right\} $,
\[
\log n\le c\,\max_{1\le k\le n}\left\Vert w_{k}^{n}\right\Vert _{H}\max_{1\le k\le n}\left\Vert \sum_{j=1}^{k}v_{j}\right\Vert _{H}\]
and \[
\log n\le c\,\max_{1\le k\le n}\left\Vert v_{k}\right\Vert _{H}\max_{1\le k\le n}\left\Vert \sum_{j=1}^{k}w_{j}^{n}\right\Vert _{H}.\]
The constant $c>0$ is independent of $n$, $H$, and the sets of
vectors. }
\end{cor}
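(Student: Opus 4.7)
The plan is to reduce everything to a direct application of Theorem~\ref{thm:main} to the same pair, taken in two different orders. First I would verify that the hypotheses of Theorem~\ref{thm:main} are actually in force. The space $H$ here is only assumed to be an inner-product space, but the subspace $H_0 = \operatorname{span}\{v_1,\dots,v_n\}$ is finite-dimensional, hence automatically complete, and by the cited Theorem 15 in Chapter 3 of \cite{HoffmanKunze} the dual basis $\{w_1^n,\dots,w_n^n\}$ lives inside $H_0$. Thus $\{v_j\}$ and $\{w_j^n\}$ form a bi-orthogonal pair in the finite-dimensional Hilbert space $H_0$, and applying Theorem~\ref{thm:main} to this pair gives the first inequality
\[
\log n \le c\,\max_{1\le k\le n}\|w_k^n\|_H\;\max_{1\le k\le n}\Bigl\|\sum_{j=1}^{k}v_j\Bigr\|_H
\]
with no further work.

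For the second inequality I would simply swap the roles of the two families. The key observation is that bi-orthogonality is symmetric: since
\[
\langle w_k^n, v_j\rangle_H = \overline{\langle v_j, w_k^n\rangle_H} = \overline{\delta_{jk}} = \delta_{jk},
\]
the pair $(\{w_1^n,\dots,w_n^n\},\{v_1,\dots,v_n\})$ is also bi-orthogonal (in the opposite order). Applying Theorem~\ref{thm:main} now with $w_j^n$ playing the role of ``$v_j$'' and $v_j$ playing the role of ``$w_j$'' yields
\[
\log n \le c\,\max_{1\le k\le n}\|v_k\|_H\;\max_{1\le k\le n}\Bigl\|\sum_{j=1}^{k}w_j^n\Bigr\|_H,
\]
which is the second asserted inequality. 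The constant $c$ is the one provided by Theorem~\ref{thm:main}, so it depends on neither $n$, nor $H$, nor the chosen vectors.

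There is essentially no obstacle here; the only thing to be mildly careful about is that Theorem~\ref{thm:main} is stated over a Hilbert space while the corollary allows a bare inner-product space, but the finite-dimensional reduction handled in the first paragraph disposes of this at once. If one wanted to avoid introducing $H_0$ explicitly, one could instead pass to the Hilbert space completion $\widetilde{H}$ of $H$; the inner products of the finitely many vectors $v_j$, $w_k^n$, and all partial sums $\sum_{j=1}^k v_j$ and $\sum_{j=1}^k w_j^n$ are preserved, so Theorem~\ref{thm:main} applies verbatim in $\widetilde{H}$ and returns the same two inequalities.
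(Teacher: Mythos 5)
Your proposal is correct and follows exactly the approach the paper intends: the paper's entire proof is the remark ``We can apply Theorem 1 to the pair in either order,'' and you carry that out, while also supplying the (useful, if routine) observations that bi-orthogonality is symmetric under conjugation and that the Hilbert-space hypothesis can be recovered by restricting to the finite-dimensional span or passing to the completion.
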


\subsection{Matrices}

Suppose that $A$ is an invertible $n\times n$ matrix with complex
entries and columns \[
a_{1},\dots,a_{n}\in\mathbb{C}^{n}.\]
Let $b_{1},\dots,b_{n}$ be the rows of $A^{-1}$. From their definition
\[
\sum_{j=1}^{n}b_{ij}a_{jk}=\delta_{ik}\]
 and so the two sets of vectors \[
\left\{ \overline{b_{1}^{T}},\dots,\overline{b_{n}^{T}}\right\} \text{ and }\left\{ a_{1},\dots,a_{n}\right\} \]
 are a bi-orthogonal pair in $\mathbb{C}^{n}$. Theorem \ref{thm:main}
then says that \[
\log(n)\le c\,\max_{1\le k\le n}\left\Vert b_{k}\right\Vert \,\max_{1\le k\le n}\left\Vert \sum_{j=1}^{k}a_{j}\right\Vert .\]
 The norm here is the finite dimensional $\ell^{2}$ norm.

Note that \cite{0857.42019} has logarithmic lower bounds for $\ell^{1}$-norms
of column vectors of orthogonal matrices.

\bibliographystyle{amsplain}

 \end{document}